\DeclareMathOperator{\domain}{Dom}
\newtheorem{thm}{Theorem}[section]
\newtheorem{bigthm}{Theorem}
\newtheorem{lem}[thm]{Lemma}
\newtheorem*{rem*}{Remark}
\newtheorem{rem}[thm]{Remark}
\begin{document}
%%%%%%%%%%%%%%%%%%%%%%%%%%%%%%%%%%%%%%%%%%%%%%

\author[A. Nowak]{Adam Nowak}
\address{Instytut Matematyczny\\
Polska Akademia Nauk\\
\'Sniadeckich 8\\
00-956 Warszawa, Poland}
\email{adam.nowak@impan.pl}

\author[L. Roncal]{Luz Roncal}
\address{Departamento de Matem\'aticas y Computaci\'on\\
Universidad de La Rioja\\
26004 Logro\~no, Spain}
\email{luz.roncal@unirioja.es}

\footnotetext{
\emph{\noindent 2010 Mathematics Subject Classification:} primary 42C05; secondary 35K08, 60J35. \\
%42C05 - 	Orthogonal functions and polynomials, general theory
%42C10 - Fourier series in special orthogonal functions
%35K08  	Heat kernel
%60J35  	Transition functions, generators and resolvents
\emph{Key words and phrases:} Fourier-Bessel expansions, heat kernel,
			Bessel process, transition density.
			
		The first-named author was supported in part by MNiSW Grant N N201 417839.
		Research of the second-named author supported by the grant
			MTM2012-36732-C03-02 from Spanish Government.
}

%%%%%%%%%%%%%%%%%%%%%%%%%%%%%%%%%%%%%%%%%%%%%%%%%%%%%%
\title[Heat kernel estimates]
	{Sharp heat kernel estimates in the Fourier-Bessel setting
	for a continuous range of the type parameter}
%%%%%%%%%%%%%%%%%%%%%%%%%%%%%%%%%%%%%%%%%%%%%%%%%%%%%%

%%%%%%%%%%%%%%%%%%%%%%%%%%%%%%%%%%%%%%%%%%%%%%%%%%%%%%
\begin{abstract}
The heat kernel in the setting of classical Fourier-Bessel expansions is defined by an oscillatory
series which cannot be computed explicitly. We prove qualitatively sharp estimates of this kernel.
Our method relies on establishing a connection with a situation of expansions based on Jacobi polynomials
and then transferring known sharp bounds for the related Jacobi heat kernel.
\end{abstract}
%%%%%%%%%%%%%%%%%%%%%%%%%%%%%%%%%%%%%%%%%%%%%%%%%%%%%%

\maketitle

%%%%%%%%%%%%%%%%%%%%%%%%%%%%%%%%%%%%%%%%%%%%%%%%%%%%%%
\section{Introduction}
%%%%%%%%%%%%%%%%%%%%%%%%%%%%%%%%%%%%%%%%%%%%%%%%%%%%%%

Let $J_{\nu}$ denote the Bessel function of the first kind and order $\nu > -1$, and let
$\{\lambda_{n,\nu} : n \ge 1\}$ be the sequence of successive positive zeros of $J_{\nu}$.
The Fourier-Bessel heat kernel is given by the oscillating sum
\begin{equation} \label{ker_Bessel}
G_t^{\nu}(x,y) = 2(xy)^{-\nu} \sum_{n=1}^{\infty} \exp\big(-t\lambda_{n,\nu}^2\big)
	 \frac{J_{\nu}(\lambda_{n,\nu}x)J_{\nu}(\lambda_{n,\nu}y)}{|J_{\nu+1}(\lambda_{n,\nu})|^2}.
\end{equation}
The numbers $\lambda_{n,\nu}^2$ and the functions 
$u\mapsto \sqrt{2} u^{-\nu} J_{\nu}(\lambda_{n,\nu}u)/|J_{\nu+1}(\lambda_{n,\nu})|$ occurring here 
are the eigenvalues and the eigenfunctions, respectively, of the Bessel differential operator
$- \frac{d^2}{du^2} - \frac{2\nu+1}u \, \frac{d}{du}$ considered on the interval $(0,1)$.
It is well known that this kernel gives the solution of the initial-value problem for the Bessel heat
equation on $(0,1)$ with Dirichlet boundary condition at the right endpoint.

Our main result is the following qualitatively sharp description of $G_t^{\nu}(x,y)$.
\begin{bigthm} \label{thm:main}
Assume that $\nu > -1$. Given any $T>0$, there exist positive constants $C, c_1$ and $c_2$, depending
only on $\nu$ and $T$, such that
\begin{align*}
& \frac{1}C \, \big[ t + xy \big]^{-\nu-1/2} \, \frac{(1-x)(1-y)}{t+(1-x)(1-y)}
	\, \frac{1}{\sqrt{t}} \exp\bigg({- c_1 \frac{(x-y)^2}{t}}\bigg) \\
 \le & G_t^{\nu}(x,y)\\
 \le & C\, \big[ t + xy \big]^{-\nu-1/2} \, \frac{(1-x)(1-y)}{t+(1-x)(1-y)}
	\, \frac{1}{\sqrt{t}} \exp\bigg({- c_2 \frac{(x-y)^2}{t}}\bigg),
\end{align*}
uniformly in $x,y \in [0,1]$ and $0 < t \le T$. Moreover,
$$
C^{-1}\, \exp\big(-t\lambda^2_{1,\nu}\big)\, (1-x)(1-y) \le G_t^{\nu}(x,y) 
	\le C\, \exp\big(-t\lambda^2_{1,\nu}\big)\, (1-x)(1-y),
$$
uniformly in $x,y \in [0,1]$ and $t \ge T$.
\end{bigthm}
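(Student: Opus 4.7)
My strategy, as announced in the abstract, is to transfer the problem to the Jacobi polynomial setting, where sharp two-sided heat kernel bounds are already known (in particular, those of Nowak and Sj\"ogren). The first and decisive step is to establish a pointwise bridge of the form
$$
G_t^\nu(x,y) \;=\; \Phi(x,y)\, H_t^{\alpha,\beta}\bigl(\phi(x),\phi(y)\bigr)
$$
(or at worst a sufficiently tight two-sided comparison), where $H_t^{\alpha,\beta}$ is a Jacobi-type heat kernel, $\phi$ is a trigonometric change of variable, and $\Phi$ is an explicit multiplier. The natural candidates for the Jacobi parameters are $\alpha=\nu$ and $\beta=1/2$: $\alpha=\nu$ is dictated by the Bessel weight $x^{2\nu+1}$ near $x=0$ (matching the Jacobi weight $(\sin(\theta/2))^{2\alpha+1}$ there), while $\beta=1/2$ reflects the Dirichlet condition at $x=1$, which forces the eigenfunctions in \eqref{ker_Bessel} to vanish linearly at that endpoint. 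Constructing $\Phi$ and $\phi$ should proceed by term-by-term manipulation of \eqref{ker_Bessel} via Mehler--Heine-type relations between $J_\nu$ and Jacobi polynomials $P_n^{(\nu,1/2)}$, together with the classical asymptotics of $\lambda_{n,\nu}$ and of the normalizations $|J_{\nu+1}(\lambda_{n,\nu})|$.

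Once this bridge is in place, the second step is essentially mechanical. Known sharp bounds for the Jacobi heat kernel have the structure
$$
H_t^{\alpha,\beta}(\theta,\phi) \;\asymp\; \frac{1}{\sqrt{t}}\, \exp\!\Bigl(-c\,\frac{(\theta-\phi)^2}{t}\Bigr)\,\bigl(t+\theta\phi\bigr)^{-\alpha-1/2}\,\bigl(t+(\pi-\theta)(\pi-\phi)\bigr)^{-\beta-1/2},
$$
uniformly for short $t$. With $\alpha=\nu$ and $\beta=1/2$, pulling this back through $\phi$ and multiplying by $\Phi$ should reproduce the expression in Theorem~\ref{thm:main} on $0<t\le T$: $(\theta-\phi)^2$ matches $(x-y)^2$ after comparing chords with arcs uniformly on $[0,1]$; the factor $(t+\theta\phi)^{-\nu-1/2}$ transforms into $(t+xy)^{-\nu-1/2}$; and the factor $\bigl(t+(\pi-\theta)(\pi-\phi)\bigr)^{-1}$, combined with a prefactor $\Phi \asymp (1-x)(1-y)$, produces precisely $(1-x)(1-y)/\bigl(t+(1-x)(1-y)\bigr)$.

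For the long-time estimate $t\ge T$, I would argue directly from the spectral series. Since $\lambda_{n,\nu}^2$ grows like $\pi^2 n^2$, the sum in \eqref{ker_Bessel} is dominated by its first term: the tail is controlled by $\exp\!\bigl(-t(\lambda_{2,\nu}^2-\lambda_{1,\nu}^2)\bigr)$ times a uniformly convergent series, which is negligible compared to the first summand for all $t\ge T$ and $x,y\in[0,1]$. The claim then reduces to the elementary fact that $u\mapsto u^{-\nu} J_\nu(\lambda_{1,\nu} u)$ is comparable to $1-u$ on $[0,1]$; this follows because the function is strictly positive on $(0,1)$, tends to a positive constant as $u\to 0$, and vanishes linearly at $u=1$ with nonzero derivative $-\lambda_{1,\nu} J_{\nu+1}(\lambda_{1,\nu})$.

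\textbf{Main obstacle.} The principal difficulty lies in the first step, namely in producing a pointwise kernel identity (or sufficiently tight two-sided comparison) with a Jacobi heat kernel. Because the Fourier--Bessel expansion is built from genuine Bessel functions rather than from orthogonal polynomials, the link between the two settings is only asymptotic in the spectral parameter, and one must control uniformly in $n$ both the discrepancies between $\lambda_{n,\nu}$ and the corresponding Jacobi eigenvalues and the differences of the two families of eigenfunctions, in order for the Jacobi bounds to transfer sharply. A secondary concern is that the Dirichlet prefactor $(1-x)(1-y)$ be captured cleanly at the kernel level rather than only after integration against test functions.
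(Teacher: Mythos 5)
Your proposal correctly identifies the target (the Jacobi setting with parameters $\alpha=\nu$, $\beta=1/2$) and the known input (sharp two-sided Jacobi heat kernel bounds), but the decisive step --- the bridge between the two kernels --- is not actually constructed, and the mechanism you propose for it would not work. An exact pointwise identity $G_t^{\nu}(x,y)=\Phi(x,y)\,H_t^{\alpha,\beta}(\phi(x),\phi(y))$ cannot be obtained by term-by-term matching of the two spectral series: the Bessel eigenvalues $\lambda_{n,\nu}^2$ agree with the Jacobi eigenvalues $\Lambda_{n-1}^{\nu,1/2}=\pi^2(n+\nu/2-1/4)^2$ only asymptotically (via the McMahon expansion), and the eigenfunctions agree only through Mehler--Heine-type limits; the accumulated errors over $n$ in a heavily oscillating series with massive cancellation are exactly what makes a direct analysis of \eqref{ker_Bessel} intractable. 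You flag this yourself as the ``main obstacle,'' which means the proof is missing its central ingredient; as written, the argument reduces the theorem to an unproved (and, in the exact-identity form, false except for $\nu=\pm1/2$) claim.

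The paper's resolution is to work at the level of generators rather than kernels. One conjugates to the Lebesgue-measure picture, $\mathbb{K}_t^{\nu}(x,y)=(xy)^{\nu+1/2}G_t^{\nu}(x,y)$, and observes that the corresponding Schr\"odinger-type operators satisfy $\mathbb{L}^{\nu}-\mathbb{J}^{\nu,1/2}=H^{\nu}$, where $H^{\nu}$ is a \emph{bounded continuous potential} on $[0,1]$ with $|H^{\nu}(x)|\le|H^{\nu}(1)|$. After proving that the domains of the two self-adjoint extensions coincide (which requires an integration-by-parts lemma controlling boundary terms of $\psi_n^{\nu}$ and $\Phi_k^{\nu,1/2}$), the Trotter product formula with $A=\widetilde{\mathbb{J}}^{\nu,1/2}$ and $B=$ multiplication by $H^{\nu}$ yields the two-sided comparison
$e^{-t|H^{\nu}(1)|}\,\mathbb{G}_t^{\nu,1/2}(x,y)\le\mathbb{K}_t^{\nu}(x,y)\le e^{t|H^{\nu}(1)|}\,\mathbb{G}_t^{\nu,1/2}(x,y)$
for all $t>0$, which transfers the Jacobi bounds with no term-by-term analysis whatsoever. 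This is the idea your proposal lacks. Separately, your long-time argument needs more care: to dominate the tail of \eqref{ker_Bessel} by the first term \emph{uniformly} in $x,y$ you must ensure the tail also vanishes like $(1-x)(1-y)$ near the boundary (using $|\phi_n^{\nu}(x)|\le C_n(1-x)$ with polynomial growth of $C_n$), and this only closes for $T$ sufficiently large; the paper fills the intermediate range $T_0\le t\le T$ by specializing the already-proved short-time bounds.
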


The main contents of Theorem \ref{thm:main} are of course the short time bounds. Proving them
by a direct analysis of the heavily oscillating series in \eqref{ker_Bessel} is practically impossible.
Note that the Bessel function $J_{\nu}$ is transcendental in general, and can be expressed by means
of elementary functions only if the index $\nu$ is half-integer, i.e.\ $\nu = k/2$ for some integer $k$.
Furthermore, the zeros of $J_{\nu}$ are known explicitly only when $\nu = \pm 1/2$. Note also that
the order of magnitude of the numbers $\lambda_{n,\nu}^2$
appearing in the exponential factor is $n^2$ as $n \to \infty$;
in particular, the asymptotic distribution of $\lambda_{n,\nu}^2$ is not linear.

The behavior of the Fourier-Bessel heat kernel does not seem to have been studied
before, except for our previous paper \cite{NR}. More precisely, in \cite[Theorem 3.3]{NR} we derived
qualitatively sharp estimates for $G_t^{\nu}(x,y)$ under the assumption that $\nu$ is
a half-integer not less than $-1/2$
(actually, this restriction is essential only for the short time bounds, see \cite[Theorem 3.7]{NR}).
We also conjectured analogous estimates for general $\nu > -1$, see \cite[Conjecture 4.3]{NR}, which
are now confirmed by Theorem~\ref{thm:main}. However, the methods we use in the present
paper are much different from those applied in \cite{NR}. Here the main argument is based on
a relation we establish between the Fourier-Bessel setting and the framework related
to Jacobi `functions'. This makes it possible to transfer qualitatively sharp estimates of the Jacobi
heat kernel obtained recently by Coulhon, Kerkyacharian and Petrushev \cite{CKP} and independently
by Nowak and Sj\"ogren \cite{NS}.
We remark that actually in \cite{CKP} heat kernel estimates were established in a very general setting
but, nevertheless, they do not apply directly to the Fourier-Bessel case.

Our principal motivation for investigating $G_t^{\nu}(x,y)$ comes from an interest in harmonic
analysis related to Fourier-Bessel expansions, see the references in \cite[Section 1]{NR}.
Additional motivation emerges from the probabilistic interpretation
of the Fourier-Bessel heat kernel, see \cite[Section I]{KM}.
Namely, it is a transition probability density for the
time scaled Bessel process $X^{\nu+1/2}_{2t}$ on $(0,\infty)$
(with reflecting barrier at $x=0$ when $-1< \nu <0$, see \cite[Appendix I, Section 21]{BS})
killed upon leaving the interval $(0,1)$. We also note that
for $\nu=d/2-1$, $d \ge 1$, the kernel
$G_t^{\nu}(x,y)$ provides a fundamental solution to the standard heat equation
in the $d$-dimensional Euclidean unit ball with Dirichlet boundary condition
and a radial initial condition.

There are two simple cases occurring already in Fourier's works, at least implicitly. These are
$G_t^{- 1/2}(x,y)$ and $G_t^{1/2}(x,y)$, and the two kernels
can be written by means of non-oscillating series, see \cite[Section 4]{NR}.
The argument is based on the periodized Gauss-Weierstrass kernel and simple boundary-value problems
for the classical heat equation in an interval. No more elementary representations in these cases
seem to be possible, and this suggests that a general closed explicit formula for
$G_t^{\nu}(x,y)$ does not exist. The estimates in Theorem \ref{thm:main} are therefore
a natural and desirable substitute for an exact expression.

%%%%%%%%%%%%%%%%%%%%%%%%%%%%%%%%%%%%%%%%%%%%%%%%%%%%%%
\section{Fourier-Bessel and Jacobi settings} \label{sec:prel}
%%%%%%%%%%%%%%%%%%%%%%%%%%%%%%%%%%%%%%%%%%%%%%%%%%%%%%

Given $\nu > -1$, consider the Bessel differential operator
$$
L^{\nu} = - \frac{d^2}{dx^2} - \frac{2\nu+1}x \, \frac{d}{dx}
$$
on the interval $(0,1)$. It is well known that $L^{\nu}$ is symmetric and nonnegative on
$C_c^2(0,1) \subset L^2(d\mu_{\nu})$, where $\mu_{\nu}$
is a power measure in the interval $[0,1]$ given by
$$
d\mu_{\nu}(x) = x^{2\nu+1} \, dx.
$$
A classical orthonormal basis in $L^2(d\mu_{\nu})$ consisting of eigenfunctions of
$L^{\nu}$ is the Fourier-Bessel system $\{\phi_n^{\nu} : n \ge 1\}$ (cf. \cite[Chapter XVIII]{Wat})
defined by
$$
\phi_n^{\nu}(x) = \frac{\sqrt{2}}{|J_{\nu+1}(\lambda_{n,\nu})|} x^{-\nu} J_{\nu}(\lambda_{n,\nu}x),
	\qquad n \ge 1, \quad x \in [0,1];
$$
here the value $\phi_n^{\nu}(0)$ is understood in the limiting sense.
We have
$$
L^{\nu} \phi_n^{\nu} = \lambda_{n,\nu}^2 \phi_n^{\nu}, \qquad n \ge 1.
$$

The self-adjoint extension of $L^{\nu}$ associated with the Fourier-Bessel system
is defined naturally in $L^2(d\mu_{\nu})$ via its spectral decomposition as
$$
\widetilde{L}^{\nu}f = \sum_{n=1}^{\infty} \lambda_{n,\nu}^2
	\langle f, \phi_n^{\nu} \rangle_{d\mu_{\nu}} \phi_n^{\nu},	
$$
on the domain
$$
\domain \widetilde{L}^{\nu} = \bigg\{ f \in L^2(d\mu_{\nu}) : \sum_{n=1}^{\infty}
	\big| \lambda_{n,\nu}^{2} \big\langle f, \phi_n^{\nu}\big\rangle_{d\mu_{\nu}} \big|^2 < \infty \bigg\}.
$$
The semigroup generated by $-\widetilde{L}^{\nu}$ has the integral representation
$$
\exp\big(-t \widetilde{L}^{\nu}\big)f(x) = \int_0^1 G_t^{\nu}(x,y)f(y)\, d\mu_{\nu}(y),
$$
where the kernel is given by \eqref{ker_Bessel}.

For technical reasons, we also need to consider a closely related setting
emerging from incorporating the measure $\mu_{\nu}$ into the system $\{\phi_n^{\nu}\}$.
In this way we derive the Fourier-Bessel system $\{\psi_n^{\nu} : n \ge 1\}$,
$$
{\psi}_n^{\nu}(x) = x^{\nu+1/2} \phi_n^{\nu}(x), \qquad n \ge 1, \quad x \in [0,1];
$$
the value $\psi_n^{\nu}(0)$ is understood in the limiting sense, in particular $\psi_n^{\nu}(0)=\infty$
for $\nu < -1/2$. For each $\nu>-1$ this system
is an orthonormal basis in $L^2(dx)$ (here and elsewhere $dx$ stands for Lebesgue
measure in the interval $[0,1]$) consisting of eigenfunctions of the differential operator
\begin{equation} \label{L_tilde}
\mathbb{L}^{\nu} = -\frac{d^2}{dx^2} - \frac{1/4-\nu^2}{x^2};
\end{equation}
more precisely, we have
$$
\mathbb{L}^{\nu} {\psi}_n^{\nu} = \lambda_{n,\nu}^2 {\psi}_n^{\nu}, \qquad n \ge 1.
$$
The operator $\mathbb{L}^{\nu}$ is symmetric and nonnegative on
$C_c^2(0,1) \subset L^2(dx)$. Similarly as in the previous setting, there is a natural self-adjoint
extension of $\mathbb{L}^{\nu}$ in $L^2(dx)$, denoted by
$\widetilde{\mathbb{L}}^{\nu}$, whose spectral decomposition is given by the ${\psi}_n^{\nu}$.
The associated heat semigroup $\exp(-t \widetilde{\mathbb{L}}^{\nu})$ possesses an integral
representation (with integration against $dx$), and the relevant kernel is
\begin{equation} \label{ker_rel}
\mathbb{K}_t^{\nu}(x,y) = (xy)^{\nu+1/2} G_t^{\nu}(x,y);
\end{equation}
this identity is understood in the limiting sense whenever $x=0$ or $y=0$.

Finally, we introduce the Jacobi setting based on Jacobi trigonometric `functions',
see \cite[Section 2]{NS}, scaled to the interval $[0,1]$. Let $P_k^{\alpha,\beta}$,
$k=0,1,2,\ldots$, be the classical Jacobi polynomials
with type parameters $\alpha,\beta>-1$, as defined in Szeg\"o's monograph \cite{Sz}. Define
$$
\Phi_k^{\alpha,\beta}(x) = c_k^{\alpha,\beta}
		\Big(\sin\frac{\pi x}2\Big)^{\alpha+1/2} \Big(\cos\frac{\pi x}2\Big)^{\beta+1/2}
		P_k^{\alpha,\beta}\big(\cos\pi x\big), \qquad k \ge 0, \quad x \in [0,1],
$$
with
$$
c_k^{\alpha,\beta} = \bigg( \frac{\pi (2k+\alpha+\beta+1)\Gamma(k+\alpha+\beta+1)\Gamma(k+1)}
	{\Gamma(k+\alpha+1)\Gamma(k+\beta+1)} \bigg)^{1/2},
$$
where for $k=0$ and $\alpha+\beta=-1$ the product $(2k+\alpha+\beta+1)\Gamma(k+\alpha+\beta+1)$
must be replaced by $\Gamma(\alpha+\beta+2)$. The values $\Phi_k^{\alpha,\beta}(0)$ and 
$\Phi_k^{\alpha,\beta}(1)$ are understood in the limiting sense and they may be infinite for some $\alpha$
and $\beta$.
Then the system $\{\Phi_k^{\alpha,\beta}: k\ge 0\}$
is an orthonormal basis in $L^2(dx)$. Moreover, each $\Phi_k^{\alpha,\beta}$ is an eigenfunction
of the differential operator
$$
\mathbb{J}^{\alpha,\beta} = -\frac{d^2}{dx^2} - \frac{\pi^2(1/4-\alpha^2)}{4\sin^2(\pi x /2)}
	- \frac{\pi^2(1/4-\beta^2)}{4\cos^2(\pi x/2)},
$$
and we have
$$
\mathbb{J}^{\alpha,\beta} \Phi_k^{\alpha,\beta} = \Lambda_k^{\alpha,\beta} \Phi_k^{\alpha,\beta},
	 \qquad k \ge 0, \qquad \textrm{where} \qquad \Lambda_k^{\alpha,\beta} =
	 \pi^2 \Big( k + \frac{\alpha+\beta+1}2\Big)^2.
$$
Thus $\mathbb{J}^{\alpha,\beta}$ has a natural self-adjoint extension in $L^2(dx)$ given by
$$
\widetilde{\mathbb{J}}^{\alpha,\beta} f = \sum_{k=0}^{\infty} \Lambda_k^{\alpha,\beta}
	\big\langle f, \Phi_k^{\alpha,\beta} \big\rangle \Phi_k^{\alpha,\beta}
$$
on the domain
$$
\domain \widetilde{\mathbb{J}}^{\alpha,\beta} = \bigg\{ f \in L^2(dx) : \sum_{k=0}^{\infty}
	\big| \Lambda_k^{\alpha,\beta} \big\langle f, \Phi_k^{\alpha,\beta}\big\rangle \big|^2 < \infty \bigg\}.
$$

The semigroup generated by $-\widetilde{\mathbb{J}}^{\alpha,\beta}$ in $L^2(dx)$
has the integral representation
$$
\exp\big(-t\widetilde{\mathbb{J}}^{\alpha,\beta}\big)f(x) =
\int_0^1 \mathbb{G}_t^{\alpha,\beta}(x,y)f(y)\, dy,
$$
where the Jacobi heat kernel is defined by the oscillating series
$$
\mathbb{G}_t^{\alpha,\beta}(x,y) = \sum_{k=0}^{\infty} \exp\big( -t\Lambda_k^{\alpha,\beta}\big)
	\Phi_k^{\alpha,\beta}(x) \Phi_k^{\alpha,\beta}(y).
$$
Recently, qualitatively sharp estimates of $\mathbb{G}_t^{\alpha,\beta}(x,y)$ were obtained in
\cite{CKP} for $\alpha,\beta > -1$ and independently in \cite{NS}, with completely different methods and
under the restriction $\alpha,\beta \ge -1/2$. The result below is a consequence of
\cite[Theorem 7.2]{CKP} (see also \cite[Theorem A]{NS} and the beginning of \cite[Section 4]{NS}),
the relation between various Jacobi heat kernels \cite[(3)]{NS} and a simple
scaling argument.
\begin{thm}[\cite{CKP,NS}] \label{thm:Jacobi}
Assume that $\alpha,\beta > -1$. Given any $T > 0$, there exist positive constants 
$C, c_1$ and $c_2$, depending only on $\alpha, \beta$ and $T$, such that
\begin{align*}
& \frac{1}C \, \bigg[ \frac{xy}{t+xy} \bigg]^{\alpha+1/2} \, 
	\bigg[\frac{(1-x)(1-y)}{t+(1-x)(1-y)}\bigg]^{\beta+1/2}
	\, \frac{1}{\sqrt{t}} \exp\bigg({- c_1 \frac{(x-y)^2}{t}}\bigg) \\
 \le & \mathbb{G}_t^{\alpha,\beta}(x,y)\\
 \le & C\, \bigg[ \frac{xy}{t+xy} \bigg]^{\alpha+1/2} \, 
	\bigg[\frac{(1-x)(1-y)}{t+(1-x)(1-y)}\bigg]^{\beta+1/2}
	\, \frac{1}{\sqrt{t}} \exp\bigg({- c_2 \frac{(x-y)^2}{t}}\bigg),
\end{align*}
uniformly in $x,y \in [0,1]$ and $0 < t \le T$, where the expressions in the lower and upper estimates
are understood in the limiting sense whenever $x$ or $y$ is an endpoint of $[0,1]$.
\end{thm}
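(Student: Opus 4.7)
The statement is essentially an unpacking of existing sharp Jacobi heat-kernel bounds, so the plan is to combine three off-the-shelf ingredients: the sharp estimates from \cite[Theorem 7.2]{CKP}, the dictionary \cite[(3)]{NS} between the several normalizations of the Jacobi heat kernel, and a linear rescaling. First I would invoke \cite{CKP} (or, for $\alpha,\beta\ge -1/2$, \cite[Theorem A]{NS}), which yield sharp two-sided Gaussian bounds for the Jacobi heat kernel on $[0,\pi]$ corresponding to the Jacobi polynomials orthonormal with respect to the natural Jacobi weight $w_{\alpha,\beta}(\theta)=(\sin(\theta/2))^{2\alpha+1}(\cos(\theta/2))^{2\beta+1}$. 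In \cite{CKP} these are phrased in abstract Dirichlet-space language; one has to check that the Jacobi form fits the general hypotheses (doubling, Poincar\'e inequality, intrinsic distance coinciding with the usual arc-length), after which the universal bound reduces to a Gaussian factor $\exp(-c(\theta-\theta')^2/t)$ times the reciprocal square root of the ball-volume product $w_{\alpha,\beta}(B(\theta,\sqrt t))\,w_{\alpha,\beta}(B(\theta',\sqrt t))$.

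Second, I would pass from the polynomial version of the heat kernel to the function version $\mathbb{G}^{\alpha,\beta}_t$ by multiplying by $\sqrt{w_{\alpha,\beta}(\theta)w_{\alpha,\beta}(\theta')}$, as recorded by the identity \cite[(3)]{NS}. A direct computation with $w_{\alpha,\beta}$, using the standard ball-volume comparison $w_{\alpha,\beta}(B(\theta,r))\asymp r\,(\sin(\theta/2)+r)^{2\alpha+1}(\cos(\theta/2)+r)^{2\beta+1}$, converts the inverse ball-volume factor into the explicit expression
\[
\bigg[\frac{\sin(\theta/2)\sin(\theta'/2)}{t+\sin(\theta/2)\sin(\theta'/2)}\bigg]^{\alpha+1/2} \bigg[\frac{\cos(\theta/2)\cos(\theta'/2)}{t+\cos(\theta/2)\cos(\theta'/2)}\bigg]^{\beta+1/2}\frac{1}{\sqrt t},
\]
which is the structural form appearing in the target statement, up to the change of variables.

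Third, I would perform the affine change of variables $\theta=\pi x$, $\theta'=\pi y$, sending $[0,\pi]$ onto $[0,1]$. This rescales eigenvalues by $\pi^2$ (yielding exactly $\Lambda_k^{\alpha,\beta}$) and multiplies the kernel by a factor of $\pi$, both harmless. Since $\sin(\pi x/2)\asymp x$ and $\cos(\pi x/2)\asymp 1-x$ uniformly on $[0,1]$, the bracket factors become $[xy/(t+xy)]^{\alpha+1/2}$ and $[(1-x)(1-y)/(t+(1-x)(1-y))]^{\beta+1/2}$, while the Gaussian transforms as $\exp(-c(\theta-\theta')^2/t)\mapsto \exp(-c\pi^2(x-y)^2/t)$; absorbing $\pi^2$ into the constants $c_1,c_2$ delivers the stated bound. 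Uniformity in $0<t\le T$ is preserved at every step, and the limiting interpretation at the endpoints $x,y\in\{0,1\}$ is automatic because each of the bracket ratios has a well-defined limit in $[0,1]$ as one variable tends to an endpoint.

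The main obstacle is bookkeeping rather than analysis: verifying that the abstract ball-volume factor of \cite{CKP} collapses to the claimed explicit bracket form, checking that the reabsorption of $w_{\alpha,\beta}$ matches the prefactor in the definition of $\Phi_k^{\alpha,\beta}$ exactly (including the correct exponents $\alpha+1/2$, $\beta+1/2$), and confirming that the rescaling produces no spurious Jacobian or logarithmic corrections. All three checks are mechanical given the product structure of $w_{\alpha,\beta}$ and the comparisons $\sin(\pi x/2)\asymp x$, $\cos(\pi x/2)\asymp 1-x$ on $[0,1]$, so no new genuinely analytic work is required beyond the input of \cite{CKP,NS}.
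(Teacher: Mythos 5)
Your proposal follows exactly the route the paper takes: the paper derives Theorem \ref{thm:Jacobi} by citing \cite[Theorem 7.2]{CKP} (and \cite[Theorem A]{NS} for $\alpha,\beta\ge-1/2$), the relation \cite[(3)]{NS} between the different normalizations of the Jacobi heat kernel, and a simple affine rescaling from $[0,\pi]$ to $[0,1]$ — precisely your three ingredients. Your write-up is a correct and somewhat more detailed unpacking of the same argument, so there is nothing to add.
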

These estimates are crucial for our reasoning proving Theorem \ref{thm:main}.

%%%%%%%%%%%%%%%%%%%%%%%%%%%%%%%%%%%%%%%%%%%%%%%%%%%%%%
\section{Proof of Theorem \ref{thm:main}} \label{sec:proof}
%%%%%%%%%%%%%%%%%%%%%%%%%%%%%%%%%%%%%%%%%%%%%%%%%%%%%%

We first focus on showing the short time bounds, which constitute the main part of Theorem~\ref{thm:main}.
Our argument relies on relating the Fourier-Bessel heat kernel $\mathbb{K}_t^{\nu}(x,y)$ with the Jacobi
heat kernel $\mathbb{G}_t^{\alpha,\beta}(x,y)$ and then transferring the estimates of
Theorem \ref{thm:Jacobi}. This will be achieved by proving that the generator of the
Fourier-Bessel semigroup $\{\exp(-t\widetilde{\mathbb{L}}^{\nu})\}$ is a `slight' perturbation of the
Jacobi generator with the parameters of type $\alpha = \nu$ and $\beta =1/2$.
Then the desired relation will follow by the so-called Trotter product formula.

A close connection between the generators is suggested by a relation at the level of differential
operators. Observe that ${\mathbb{L}}^{\nu}$ and $\mathbb{J}^{\nu,1/2}$ differ only by a zero
order term,
$$
\mathbb{L}^{\nu} - \mathbb{J}^{\nu,1/2} = \bigg(\frac{1}4-\nu^2\bigg)
	\Bigg[ \frac{\pi^2}{4\sin^2 \frac{\pi x}2} - \frac{1}{x^2}\Bigg] =: H^{\nu}(x).
$$
Moreover, the difference function $H^{\nu}(x)$ is continuous and bounded on $[0,1]$, with the value at $x=0$
understood in the limiting sense. In the sequel we will use repeatedly, sometimes implicitly, the bound
$$
|H^{\nu}(x)| \le |H^{\nu}(1)| = \bigg| \frac{1}4-\nu^2\bigg| \bigg(\frac{\pi^2}4 -1\bigg),
\qquad x \in [0,1].
$$
This follows by analyzing the function in square brackets above. Indeed, its 
limit value at $x=0$ is positive, and its derivative is positive in
$(0,1)$, as easily verified by means of the inequality
$$
\sin^3 u > u^3 \cos u, \qquad 0 < u < \frac{\pi}2.
$$
The latter can readily be checked by using the estimates
$$
\sin u > u - \frac{u^3}{6}, \qquad \cos u < 1-\frac{u^2}2 + \frac{u^4}{24}, \qquad u > 0,
$$
coming from Taylor expansions of the two trigonometric functions.

An essential point of the whole method is the coincidence of domains, which is stated in the following.
\begin{thm} \label{thm:dom}
For each $\nu > -1$,
$$
\domain \widetilde{\mathbb{L}}^{\nu} = \domain \widetilde{\mathbb{J}}^{\nu,1/2}.
$$
\end{thm}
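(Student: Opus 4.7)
\textbf{The plan} is to realize $\widetilde{\mathbb{L}}^{\nu}$ as a bounded self-adjoint perturbation of $\widetilde{\mathbb{J}}^{\nu,1/2}$, whereupon the coincidence of domains is immediate from the Kato--Rellich theorem. The calculation preceding the statement gives $\mathbb{L}^{\nu}-\mathbb{J}^{\nu,1/2}=H^{\nu}$ with $H^{\nu}\in L^{\infty}[0,1]$, hence the multiplication operator $M_{H^{\nu}}\colon f\mapsto H^{\nu}f$ is bounded and self-adjoint on $L^{2}(dx)$. The bounded-perturbation form of the Kato--Rellich theorem then yields that
$$
A := \widetilde{\mathbb{J}}^{\nu,1/2} + M_{H^{\nu}}
$$
is self-adjoint on $\domain A = \domain \widetilde{\mathbb{J}}^{\nu,1/2}$. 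The theorem thus reduces to the operator identity $A = \widetilde{\mathbb{L}}^{\nu}$, which transports the domain equality verbatim.

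\textbf{To identify $A$ with $\widetilde{\mathbb{L}}^{\nu}$}, I would match their spectral decompositions. Since $\widetilde{\mathbb{L}}^{\nu}$ is by construction the self-adjoint operator whose spectral resolution uses the orthonormal basis $\{\psi_n^{\nu}\}$ with eigenvalues $\lambda_{n,\nu}^{2}$, uniqueness of the spectral decomposition forces $A=\widetilde{\mathbb{L}}^{\nu}$ as soon as one verifies (a) $\psi_n^{\nu}\in\domain \widetilde{\mathbb{J}}^{\nu,1/2}$ for every $n\ge 1$, and (b) $A\psi_n^{\nu}=\lambda_{n,\nu}^{2}\psi_n^{\nu}$. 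The classical pointwise identity $\mathbb{J}^{\nu,1/2}\psi_n^{\nu}=(\lambda_{n,\nu}^{2}-H^{\nu})\psi_n^{\nu}$ holds on $(0,1)$ with an $L^{2}$ right-hand side, so (b) is a formality once (a) is established together with the fact that the abstract action of $\widetilde{\mathbb{J}}^{\nu,1/2}$ on $\psi_n^{\nu}$ agrees with the classical differential action.

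\textbf{To prove (a)}, my approach is a Green-type identity on truncated intervals $(\varepsilon,1-\varepsilon)$: integrating by parts twice in $\int\psi_n^{\nu}\,\mathbb{J}^{\nu,1/2}\Phi_k^{\nu,1/2}\,dx=\Lambda_k^{\nu,1/2}\langle\psi_n^{\nu},\Phi_k^{\nu,1/2}\rangle$ shifts the operator onto $\psi_n^{\nu}$ at the cost of Wronskian boundary terms. The explicit endpoint asymptotics $\psi_n^{\nu}(x)\sim c_n x^{\nu+1/2}$ near $0$, $\psi_n^{\nu}(1)=0$, together with the analogous $\Phi_k^{\nu,1/2}(x)\sim c'_k(\sin(\pi x/2))^{\nu+1/2}$ and $\Phi_k^{\nu,1/2}(1)=0$, make these boundary contributions vanish as $\varepsilon\to 0^{+}$. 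One obtains
$$
\Lambda_k^{\nu,1/2}\,\langle\psi_n^{\nu},\Phi_k^{\nu,1/2}\rangle
=\bigl\langle(\lambda_{n,\nu}^{2}-H^{\nu})\psi_n^{\nu},\,\Phi_k^{\nu,1/2}\bigr\rangle,
$$
and the right-hand side, as the $k$-th Jacobi coefficient of an $L^{2}$ function, is square-summable in $k$ by Parseval. This yields (a) and (b) simultaneously.

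\textbf{The principal obstacle} is the rigorous justification of the vanishing boundary terms at $x=0$ in the limit-circle regime $-1<\nu<1$, where both the Bessel and Jacobi symmetric operators admit multiple self-adjoint extensions and one must ensure that $\psi_n^{\nu}$ lies in the particular realization compatible with the Fourier-Jacobi basis. The resolution is that both $\psi_n^{\nu}$ and $\Phi_k^{\nu,1/2}$ single out the $x^{\nu+1/2}$ branch at the origin, so the Wronskian-type expression $\bigl[(\Phi_k^{\nu,1/2})'\psi_n^{\nu}-\Phi_k^{\nu,1/2}(\psi_n^{\nu})'\bigr]_{\varepsilon}=o(1)$ as $\varepsilon\to 0^{+}$. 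Once this matching of endpoint behaviours is confirmed, the Green's identity argument runs cleanly and Kato--Rellich delivers the theorem.
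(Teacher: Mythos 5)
Your proposal is correct, and its analytic core coincides with the paper's: the decisive step in both arguments is an integration by parts between the two eigenbases $\{\psi_n^{\nu}\}$ and $\{\Phi_k^{\nu,1/2}\}$ on truncated intervals, with boundary contributions killed by the endpoint asymptotics $\psi_n^{\nu}, \Phi_k^{\nu,1/2} = \mathcal{O}(x^{\nu+1/2})$ at $0$ and $\mathcal{O}(1-x)$ at $1$ (the paper's Lemma 3.2 establishes exactly the identity $\langle \widetilde{\mathbb{L}}^{\nu}\psi_n^{\nu},\Phi_k^{\nu,1/2}\rangle = \langle \psi_n^{\nu},\mathbb{L}^{\nu}\Phi_k^{\nu,1/2}\rangle$ that your Green-type computation produces, modulo the bounded term $H^{\nu}$). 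The functional-analytic packaging differs, though. The paper takes an arbitrary $f \in \domain\widetilde{\mathbb{J}}^{\nu,1/2}$, expands it in the Jacobi basis, uses the lemma to bound $\sum_n|\lambda_{n,\nu}^2\langle f,\psi_n^{\nu}\rangle|^2$ by $2\|\widetilde{\mathbb{J}}^{\nu,1/2}f\|^2+2\|fH^{\nu}\|^2$, and obtains the reverse inclusion for free from self-adjointness; you instead invoke Kato--Rellich to make $A=\widetilde{\mathbb{J}}^{\nu,1/2}+M_{H^{\nu}}$ self-adjoint on $\domain\widetilde{\mathbb{J}}^{\nu,1/2}$ and then identify $A$ with $\widetilde{\mathbb{L}}^{\nu}$ by checking that $\{\psi_n^{\nu}\}$ is a complete orthonormal eigenbasis of $A$ with eigenvalues $\lambda_{n,\nu}^2$ (which does force $A=\widetilde{\mathbb{L}}^{\nu}$, since a self-adjoint operator admitting such an eigenbasis contains, hence equals, the spectrally defined one). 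Your route buys slightly more: it yields the operator identity $\widetilde{\mathbb{L}}^{\nu}=\widetilde{\mathbb{J}}^{\nu,1/2}+M_{H^{\nu}}$ explicitly, which is precisely the input $A+B=\widetilde{\mathbb{L}}^{\nu}$ that the paper needs afterwards for the Trotter product formula and only half-justifies there. One small technical remark: at $x=0$ your raw Wronskian $\bigl[(\Phi_k^{\nu,1/2})'\psi_n^{\nu}-\Phi_k^{\nu,1/2}(\psi_n^{\nu})'\bigr]$ has individual terms of order $x^{2\nu}$, which blow up for $-1<\nu<0$, so you genuinely need the leading-order cancellation you describe (both functions carrying the $x^{\nu+1/2}$ branch with corrections of relative order $x^2$, giving $\mathcal{O}(x^{2\nu+2})$ overall); the paper sidesteps this by integrating by parts in the weighted divergence form $-x^{-\nu-1/2}D(x^{2\nu+1}D(x^{-\nu-1/2}\cdot))$, which makes each boundary term vanish separately. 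Both versions are valid for all $\nu>-1$.
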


To prove this we will need a preparatory result.
\begin{lem} \label{lem:dom}
Let $\nu > -1$. Then
\begin{equation*}
\big\langle \widetilde{\mathbb{L}}^{\nu} {\psi}_n^{\nu}, \Phi_k^{\nu,1/2}\big\rangle =
\big\langle {\psi}_n^{\nu}, \mathbb{L}^{\nu}\Phi_k^{\nu,1/2}\big\rangle, \qquad
	n \ge 1, \quad k \ge 0,
\end{equation*}
where $\mathbb{L}^{\nu}$ is the differential operator given by \eqref{L_tilde}.
\end{lem}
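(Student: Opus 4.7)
The plan is to derive the identity by a Green-type integration by parts on the truncated interval $[\epsilon,1]$ and then pass to the limit $\epsilon \to 0^+$. Since $\psi_n^\nu$ satisfies $\mathbb{L}^\nu \psi_n^\nu = \lambda_{n,\nu}^2 \psi_n^\nu$ classically on $(0,1)$ and, simultaneously, $\widetilde{\mathbb{L}}^\nu \psi_n^\nu = \lambda_{n,\nu}^2 \psi_n^\nu$ by spectral definition, the left-hand side equals $\int_0^1 (\mathbb{L}^\nu \psi_n^\nu) \Phi_k^{\nu,1/2}\, dx$, and the statement reduces to
\[
\int_0^1 \bigl[(\mathbb{L}^\nu \psi_n^\nu)\Phi_k^{\nu,1/2} - \psi_n^\nu(\mathbb{L}^\nu \Phi_k^{\nu,1/2})\bigr]\, dx = 0.
\]

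Next I would record the endpoint behavior. Near $x=0$, the power series $J_\nu(z) = (z/2)^\nu \sum_m (-1)^m (z/2)^{2m}/(m!\,\Gamma(\nu+m+1))$ combined with $\psi_n^\nu(x) = C_n\, x^{1/2} J_\nu(\lambda_{n,\nu} x)$ gives $\psi_n^\nu(x) = A_n\, x^{\nu+1/2}(1 + O(x^2))$, and the defining expression for $\Phi_k^{\nu,1/2}$ with $\beta = 1/2$, together with the Taylor expansions of $\sin(\pi x/2)$, $\cos(\pi x/2)$ and $P_k^{\nu,1/2}(\cos \pi x)$ in powers of $x^2$, similarly yields $\Phi_k^{\nu,1/2}(x) = B_k\, x^{\nu+1/2}(1 + O(x^2))$. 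Near $x=1$ both functions vanish linearly: $\psi_n^\nu$ because $J_\nu(\lambda_{n,\nu}) = 0$, and $\Phi_k^{\nu,1/2}$ because of the factor $\cos(\pi x/2)$, which now appears with exponent $\beta + 1/2 = 1$. Using $\mathbb{L}^\nu \Phi_k^{\nu,1/2} = (\Lambda_k^{\nu,1/2} + H^\nu(x))\Phi_k^{\nu,1/2}$, one sees that both integrands have size $O(x^{2\nu+1})$ near $0$, hence are absolutely integrable on $[0,1]$ for every $\nu > -1$.

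The next step is integration by parts on $[\epsilon,1]$. Since the potential part $-(1/4-\nu^2)/x^2$ of $\mathbb{L}^\nu$ is a bounded multiplication operator there, its two contributions cancel pointwise in the integrand, leaving
\[
\int_\epsilon^1 \bigl[\psi_n^\nu (\Phi_k^{\nu,1/2})'' - (\psi_n^\nu)'' \Phi_k^{\nu,1/2}\bigr]\, dx = \bigl[\psi_n^\nu(\Phi_k^{\nu,1/2})' - (\psi_n^\nu)'\Phi_k^{\nu,1/2}\bigr]_{x=\epsilon}^{x=1}.
\]
The boundary contribution at $x=1$ vanishes since both functions do. At $x=\epsilon$, writing $\psi_n^\nu(x) = x^{\nu+1/2} f(x)$ and $\Phi_k^{\nu,1/2}(x) = x^{\nu+1/2} g(x)$ with $f,g$ smooth on $[0,1]$, the bracket collapses to $x^{2\nu+1}\bigl(f(x)g'(x) - f'(x)g(x)\bigr)|_{x=\epsilon}$; since the asymptotics above force $f'(0) = g'(0) = 0$, this is $O(\epsilon^{2\nu+2})$ and tends to $0$ for every $\nu > -1$. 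Passing to the limit $\epsilon \to 0^+$ and invoking absolute integrability completes the proof.

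The main subtlety, and what I expect to be the chief obstacle, is ensuring the boundary Wronskian at $x=0$ vanishes throughout the whole range $\nu > -1$. Merely matching the leading power $x^{\nu+1/2}$ of the two functions would only produce an $O(\epsilon^{2\nu+1})$ residual, which fails to die once $\nu \le -1/2$. Securing the extra factor of $\epsilon^2$ requires exploiting that the next-order corrections to $\psi_n^\nu / x^{\nu+1/2}$ and $\Phi_k^{\nu,1/2}/x^{\nu+1/2}$ begin at order $x^2$, a consequence of the even-power parity structure inherited from the Bessel series and from the trigonometric-polynomial building blocks of $\Phi_k^{\nu,1/2}$.
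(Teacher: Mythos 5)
Your proof is correct and follows essentially the same route as the paper's: an integration by parts whose boundary terms at $0$ and $1$ are killed by the asymptotics $\psi_n^{\nu},\Phi_k^{\nu,1/2}=\mathcal{O}(x^{\nu+1/2})$ near $0$ together with the crucial extra order of vanishing $D\big(x^{-\nu-1/2}\psi_n^{\nu}\big),\,D\big(x^{-\nu-1/2}\Phi_k^{\nu,1/2}\big)=\mathcal{O}(x)$ (exactly your $f'(0)=g'(0)=0$, which the paper obtains from $D(z^{-\nu}J_{\nu}(z))=-z^{-\nu}J_{\nu+1}(z)$ and the Jacobi differentiation rule rather than from parity), plus the linear vanishing at $1$. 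The only cosmetic difference is that the paper integrates by parts twice in the divergence (Sturm--Liouville) form and checks the two resulting boundary terms separately, whereas you truncate at $\epsilon$ and control the single Wronskian $x^{2\nu+1}(fg'-f'g)$; these amount to the same computation.
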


\begin{proof}
We shall use the divergence form of $\mathbb{L}^{\nu}$,
$$
\mathbb{L}^{\nu} f(x) =
	- x^{-\nu-1/2} D \Big( x^{2\nu+1} D \big( x^{-\nu-1/2}f(x)\big)\Big),
$$
and integrate by parts. Denote
$$
\mathcal{I} = \big\langle \widetilde{\mathbb{L}}^{\nu} {\psi}_n^{\nu}, \Phi_k^{\nu,1/2}\big\rangle
	= \int_0^1 \mathbb{L}^{\nu} {\psi}_n^{\nu}(x) \Phi_k^{\nu,1/2}(x)\, dx.
$$
Clearly, the integral here converges since $\mathbb{L}^{\nu} {\psi}_n^{\nu}$
and $\Phi_k^{\nu,1/2}$ are in $L^2(dx)$.
We have
\begin{align*}
\mathcal{I} & = - \int_0^1 x^{-\nu-1/2} D \Big( x^{2\nu+1} D
	\big( x^{-\nu-1/2}{\psi}_n^{\nu}(x)\big)\Big)
	\Phi_k^{\nu,1/2}(x)\, dx \\
& = -x^{\nu+1/2} D\big(x^{-\nu-1/2}{\psi}_n^{\nu}(x)\big) \Phi_k^{\nu,1/2}(x)\Big|_0^1
	+ \int_0^1 D\big( x^{-\nu-1/2}{\psi}_n^{\nu}(x)\big)
		D\big( x^{-\nu-1/2}\Phi_k^{\nu,1/2}(x)\big)\, x^{2\nu+1}\, dx\\
& = -x^{\nu+1/2} D\big(x^{-\nu-1/2}{\psi}_n^{\nu}(x)\big) \Phi_k^{\nu,1/2}(x)\Big|_0^1
	+ x^{\nu+1/2} {\psi}_n^{\nu}(x) D\big( x^{-\nu-1/2}\Phi_k^{\nu,1/2}(x)\big)\Big|_0^1 \\
& \quad - \int_0^1 {\psi}_n^{\nu}(x)
	x^{-\nu-1/2} D \Big( x^{2\nu+1} D \big( x^{-\nu-1/2}\Phi_k^{\nu,1/2}(x)\big)\Big)\, dx\\
&	\equiv \mathcal{I}_1 + \mathcal{I}_2 + \mathcal{I}_3.
\end{align*}
Since $\mathcal{I}_3 = \langle {\psi}_n^{\nu}, \mathbb{L}^{\nu}\Phi_k^{\nu,1/2}\rangle$,
it remains to ensure that $\mathcal{I}_1 = \mathcal{I}_2 = 0$.

From the definition of $\Phi_{k}^{\alpha,\beta}$ it is clear that
$$
\Phi_k^{\nu,1/2}(x) = \mathcal{O}(x^{\nu+1/2}), \quad x \to 0^+, \qquad \textrm{and} \qquad
\Phi_k^{\nu,1/2}(x) = \mathcal{O}(1-x), \quad x \to 1^-.
$$
The estimate $|\phi_n^{\nu}(x)| \le C(1-x)$, see the proof of \cite[Theorem 3.7]{NR},
holds uniformly in $x\in (0,1)$ and implies
$$
{\psi}_n^{\nu}(x) = \mathcal{O}(x^{\nu+1/2}), \quad x \to 0^+, \qquad \textrm{and} \qquad
{\psi}_n^{\nu}(x) = \mathcal{O}(1-x), \quad x \to 1^-.
$$
Further, using $D(z^{-\nu}J_{\nu}(z)) = -z^{-\nu}J_{\nu+1}(z)$
(cf. \cite[Chapter III, Section 3$\cdot$2]{Wat}), we see that
$$
D\big( x^{-\nu-1/2}{\psi}_n^{\nu}(x)\big) = c\, x^{-\nu} J_{\nu+1}(\lambda_{n,\nu}x) =
	\begin{cases}
		\mathcal{O}(x), \quad x \to 0^+ \\
		\mathcal{O}(1), \quad x \to 1^-
	\end{cases},
$$
where $c$ does not depend on $x$. Moreover, by the differentiation rule for Jacobi polynomials
(cf. \cite[(4.21.7)]{Sz}),
$$
\frac{d}{du} P_k^{\alpha,\beta}(u) = \frac{1}{2} (k+\alpha+\beta+1) P_{k-1}^{\alpha+1,\beta+1}(u),
\qquad k \ge 0,
$$
(here we use the convention that $P_{-1}^{\alpha,\beta} = 0$) it follows that
\begin{align*}
D\big( x^{-\nu-1/2}\Phi_k^{\nu,1/2}(x)\big) & = c_1 \sin(\pi x) P_{k-1}^{\nu+1,3/2}\big(\cos(\pi x)\big)
	\bigg( \frac{\sin\frac{\pi x}2}{x}\bigg)^{\nu+1/2} \cos\frac{\pi x}2\\
& \quad + c_2 P_{k}^{\nu,1/2}\big(\cos(\pi x)\big)
	\bigg( \frac{\sin\frac{\pi x}2}{x}\bigg)^{\nu+1/2} \sin\frac{\pi x}2 \\
& \quad + c_3 P_{k}^{\nu,1/2}\big(\cos(\pi x)\big)
	\frac{\frac{\pi x}2 \cos\frac{\pi x}2 - \sin\frac{\pi x}2}{x^2}
	\bigg( \frac{\sin\frac{\pi x}2}{x}\bigg)^{\nu-1/2} \cos\frac{\pi x}2,
\end{align*}
with $c_1,c_2,c_3$ independent of $x$. Therefore
$$
D\big( x^{-\nu-1/2}\Phi_k^{\nu,1/2}(x)\big) =
	\begin{cases}
		\mathcal{O}(x), \quad x \to 0^+ \\
		\mathcal{O}(1), \quad x \to 1^-
	\end{cases}.
$$
Altogether, these facts imply $\mathcal{I}_1 = \mathcal{I}_2 = 0$.
The lemma follows.
\end{proof}

\begin{proof}[Proof of Theorem \ref{thm:dom}]
We demonstrate that $\domain \widetilde{\mathbb{J}}^{\nu,1/2} \subset \domain \widetilde{\mathbb{L}}^{\nu}$.
Then the other inclusion follows by self-adjointness of the two operators.

Assume that $f \in \domain \widetilde{\mathbb{J}}^{\nu,1/2}$ and let
$$
\mathcal{S} := \sum_{n=1}^{\infty}
	 \big|\lambda_{n,\nu}^{2}\big\langle f, \psi_n^{\nu}\big\rangle\big|^2.
$$
We must show that $\mathcal{S}$ is finite. We have
$$
\mathcal{S} = \sum_{n=1}^{\infty}
	 \big|\big\langle f, \widetilde{\mathbb{L}}^{\nu}\psi_n^{\nu}\big\rangle\big|^2 =
\sum_{n=1}^{\infty} \bigg| \sum_{k=0}^{\infty} \big\langle f, {\Phi}_k^{\nu,1/2}\big\rangle
	\big\langle{\Phi}_k^{\nu,1/2},\widetilde{\mathbb{L}}^{\nu} \psi_n^{\nu} \big\rangle \bigg|^2.
$$
Applying now Lemma \ref{lem:dom} we can write
\begin{align*}
\mathcal{S} & =  \sum_{n=1}^{\infty} \bigg| \sum_{k=0}^{\infty}
	\big\langle f, {\Phi}_k^{\nu,1/2}\big\rangle
	\big\langle (\mathbb{J}^{\nu,1/2}+H^{\nu}) {\Phi}_k^{\nu,1/2},\psi_n^{\nu} \big\rangle \bigg|^2 \\
& = \sum_{n=1}^{\infty} \bigg| \bigg\langle \sum_{k=0}^{\infty} \Lambda_{k}^{\nu,1/2}
	\big\langle f, {\Phi}_k^{\nu,1/2}\big\rangle {\Phi}_k^{\nu,1/2}, \psi_n^{\nu}\bigg\rangle
	+ \bigg\langle \sum_{k=0}^{\infty}
	\big\langle f, {\Phi}_k^{\nu,1/2}\big\rangle {\Phi}_k^{\nu,1/2},
	 \psi_n^{\nu} H^{\nu}\bigg\rangle \bigg|^2 \\
& = \sum_{n=1}^{\infty} \Big| \big\langle \widetilde{\mathbb{J}}^{\nu,1/2}f, \psi_n^{\nu}\big\rangle
	+ \big\langle f H^{\nu}, \psi_n^{\nu} \big\rangle \Big|^2.
\end{align*}
This together with Parseval's identity implies
$$
\mathcal{S} \le 2 \big\| \widetilde{\mathbb{J}}^{\nu,1/2}f\big\|_{L^2(dx)}^2
	+ 2\|f H^{\nu}\|_{L^2(dx)}^2 < \infty
$$
and consequently $f \in \domain \widetilde{\mathbb{L}}^{\nu}$.
\end{proof}

We now recall the Trotter product formula, see \cite[Chapter VIII, Section 8]{RS}.
Let $A$ and $B$ be (possibly unbounded) self-adjoint operators on a Hilbert space $\mathcal{H}$.
If $A+B$ is essentially self-adjoint on $\domain A \cap \domain B$, and $A$ and $B$ are bounded from below,
then
$$
\exp\big( -t(A+B)\big) h = \lim_{m\to \infty} \Big( \exp(-tA/m) \exp(-tB/m)\Big)^m h,
	\qquad h \in \mathcal{H}, \quad t \ge 0.
$$

Specifying this general situation to our context we take $\mathcal{H} = L^2(dx)$,
$A = \widetilde{\mathbb{J}}^{\nu,1/2}$ and choose $B$ to be the multiplication operator by $H^{\nu}$.
Then, in view of Theorem \ref{thm:dom}, $A+B = \widetilde{\mathbb{L}}^{\nu}$.
Moreover, the Trotter product formula
applies and since $|H^{\nu}(x)| \le |H^{\nu}(1)|$, $x \in [0,1]$, it implies
$$
e^{-t|H^{\nu}(1)|} \exp\big(-t\widetilde{\mathbb{J}}^{\nu,1/2} \big)f
	\le \exp\big(-t\widetilde{\mathbb{L}}^{\nu}\big)f \le
	e^{t|H^{\nu}(1)|} \exp\big(-t\widetilde{\mathbb{J}}^{\nu,1/2} \big)f, \qquad 0 \le f \in L^2(dx),
$$
for each $t \ge 0$. Taking now into account that
the Jacobi and the Fourier-Bessel heat kernels are continuous functions of their arguments
(see \cite[Section 2]{NS}, \cite[Section 2]{NR}), we infer that
$$
e^{-t|H^{\nu}(1)|} \mathbb{G}_t^{\nu,1/2}(x,y) \le \mathbb{K}_t^{\nu}(x,y) \le
	e^{t|H^{\nu}(1)|} \mathbb{G}_t^{\nu,1/2}(x,y), \qquad x,y \in [0,1], \quad t >0.
$$
This combined with Theorem \ref{thm:Jacobi} and \eqref{ker_rel}
justifies the short time bounds stated in Theorem \ref{thm:main}.

The long time estimates were proved in \cite[Theorem 3.7]{NR}, under the assumption
that the threshold $T$ is sufficiently large. However, given any $0<T_0<T$, the already proved
short time bounds imply existence of a constant $C>0$ such that
$$
C^{-1}\, (1-x)(1-y) \le G_t^{\nu}(x,y) \le C\, (1-x)(1-y), \qquad x,y \in [0,1], \quad T_0 \le t \le T,
$$
and the desired conclusion follows.

The proof of Theorem \ref{thm:main} is complete.

\begin{rem}
A slightly more detailed analysis reveals that for
$\nu \in [-1/2,1/2]$
$$
e^{-tH^{\nu}(1)} \mathbb{G}_t^{\nu,1/2}(x,y) \le \mathbb{K}_t^{\nu}(x,y) \le
	e^{-tH^{\nu}(0)} \mathbb{G}_t^{\nu,1/2}(x,y) \le \mathbb{G}_t^{\nu,1/2}(x,y),
	\qquad x,y \in [0,1], \quad t >0,
$$
and for $\nu \notin [-1/2,1/2]$
$$
\mathbb{G}_t^{\nu,1/2}(x,y) \le
e^{-tH^{\nu}(0)} \mathbb{G}_t^{\nu,1/2}(x,y) \le \mathbb{K}_t^{\nu}(x,y) \le
	e^{-tH^{\nu}(1)} \mathbb{G}_t^{\nu,1/2}(x,y), \qquad x,y \in [0,1], \quad t >0.
$$
Notice that $\mathbb{K}_t^{\pm 1/2}(x,y) = \mathbb{G}_t^{\pm 1/2, 1/2}(x,y)$.
\end{rem}

%%%%%%%%%%%%%%%%%%%%%%%%%%%%%%%%%%%%%%%%%%%%%%%%%%%%%%

\end{document}